\def\R{{\mathbb R}}
\def\N{\mathbb{N}}
\def\C{\mathbb{C}}
\def\Z{\mathbb{Z}}
\def\F{\mathbb{F}}
\newtheorem{prop}{\bf Proposition}[section]
\newtheorem{thm}[prop]{\bf Theorem}
\newtheorem{cor}[prop]{\bf Corollary}
\newtheorem{rmk}[prop]{\it Remark}
\newtheorem{ques}[prop]{\bf Question}
\begin{document}

\title[Property $\mathrm{(TTT)}$ and the Cowling--Haagerup constant]{{\bf\Large Property $\mathrm{(TTT)}$ and the Cowling--Haagerup constant}}

\author[I. Vergara]{Ignacio Vergara}
\address{Departamento de Matem\'atica y Ciencia de la Computaci\'on, Universidad de Santiago de Chile, Las Sophoras 173, Estaci\'on Central 9170020, Chile}

\email{ign.vergara.s@gmail.com}
\thanks{This work is supported by the FONDECYT project 3230024.}

\makeatletter
\@namedef{subjclassname@2020}{%
  \textup{2020} Mathematics Subject Classification}
\makeatother

\subjclass[2020]{Primary 22D55; Secondary 43A07, 46L07}
%

\keywords{Property (TTT), weak amenability, weak Haagerup property}

\begin{abstract}
We show that Property $\mathrm{(TTT)}$ is an obstruction to weak amenability with Cowling--Haagerup constant $1$. More precisely, if $G$ is a countable group and $H$ is an infinite subgroup of $G$ such that the pair $(G,H)$ has relative Property $\mathrm{(TTT)}$, then the weak Haagerup constant $\boldsymbol\Lambda_{\mathrm{WH}}(G)$ is strictly greater than $1$. We apply this result to some semidirect products and lattices in higher rank algebraic groups.
\end{abstract}


\begingroup
\def\uppercasenonmath#1{} 
\let\MakeUppercase\relax 
\maketitle
\endgroup

\section{{\bf Introduction}}

A countable group $G$ is said to be \emph{amenable} if there is a sequence of finitely supported, positive definite functions $\varphi_n:G\to\C$ such that
\begin{align*}
\lim_{n\to\infty}\varphi_n(s)=1,\quad\forall s\in G.
\end{align*}
More generally, we say that $G$ is \emph{weakly amenable} if there is a sequence of finitely supported functions $\varphi_n:G\to\C$ and a constant $C\geq 1$ such that
\begin{align*}
\lim_{n\to\infty}\varphi_n(s)=1,\quad\forall s\in G,
\end{align*}
and
\begin{align*}
\sup_{n\in\N}\|\varphi_n\|_{B_2(G)}\leq C,
\end{align*}
where $B_2(G)$ stands for the space of Herz--Schur multipliers of $G$; see Section \ref{Sec_WA_WH}. The Cowling--Haagerup constant $\boldsymbol\Lambda(G)$ is the infimum of all $C\geq 1$ such that the condition above holds. This is indeed a weakening of amenability because every positive definite function $\varphi$ satisfies $\|\varphi\|_{B_2(G)}=\varphi(1)$. Hence, if $G$ is amenable, then $\boldsymbol\Lambda(G)=1$. If $G$ is not weakly amenable, we simply set $\boldsymbol\Lambda(G)=\infty$. We refer the reader to \cite{Ver2} for a survey on weak amenability.

A strong negation of amenability is given by Property $\mathrm{(T)}$. Let $G$ be a countable group, and let $H$ be a subgroup of $G$. We say that the pair $(G,H)$ has \emph{relative Property $\mathrm{(T)}$} if every sequence of positive definite functions on $G$ converging pointwise to $1$ converges uniformly on $H$. If this condition holds for $H=G$, we simply say that $G$ has Property $\mathrm{(T)}$. We refer the reader to \cite{BedlHVa} for a thorough account on Property $\mathrm{(T)}$ and its different characterisations. It follows from the definitions that, if $H$ is infinite and the pair $(G,H)$ has relative Property $\mathrm{(T)}$, then $G$ cannot be amenable.

For weak amenability, however, the situation is quite different, and Property $\mathrm{(T)}$ is no longer an obstruction. This was first observed by Cowling and Haagerup \cite{CowHaa}, who showed that the rank 1 Lie groups $\operatorname{Sp}(n,1)$ and $\operatorname{F}_{4,-20}$, as well as their lattices, are weakly amenable. Afterwards, the list of examples was considerably expanded by Ozawa \cite{Oza2}, who showed that all hyperbolic groups are weakly amenable. It must be noted that, in a sense that can be made very precise, ``almost all'' hyperbolic groups have Property $\mathrm{(T)}$; see \cite{Zuk} and \cite{KotKot} for details. In spite of these facts, the following question remains open.

\begin{ques}\label{Ques_T_WH}
Does there exist a pair $(G,H)$ with relative Property $\mathrm{(T)}$ such that $H$ is infinite and $\boldsymbol\Lambda(G)=1$?
\end{ques}

As a matter of fact, all the known examples of groups with $\boldsymbol\Lambda(G)=1$ satisfy the Haagerup property, meaning that there is a sequence of positive definite functions $\varphi_n:G\to\C$ such that $\varphi_n$ vanishes at infinity for all $n$, and
\begin{align*}
\lim_{n\to\infty}\varphi_n(s)=1,\quad\forall s\in G.
\end{align*}
The problem of determining whether this implication holds in general is often referred to as Cowling's conjecture. In order to attack this problem, Knudby \cite{Knu} introduced the weak Haagerup property as a generalisation of both weak amenability and the Haagerup property. In this case, the approximation of the identity is given by functions vanishing at infinity and which are uniformly bounded in $B_2(G)$. We can also associate a constant $\boldsymbol\Lambda_{\mathrm{WH}}(G)$ to this property, which is defined in similar fashion; see Section \ref{Sec_WA_WH}. We always have
\begin{align*}
\boldsymbol\Lambda_{\mathrm{WH}}(G)\leq\boldsymbol\Lambda(G).
\end{align*}

In this paper, we address a variation of Question \ref{Ques_T_WH} by considering a stronger version of Property $\mathrm{(T)}$. In \cite{Oza}, Ozawa introduced Property $\mathrm{(T_P)}$ as a non-equivariant version of Property $\mathrm{(T)}$, where positive definite functions are replaced by positive definite kernels, with some additional assumptions; see Section \ref{Sec_Prop_T} for details. An a priori weaker property, called Property $\mathrm{(TTT)}$, which was also introduced in \cite{Oza}, is defined in terms of boundedness of wq-cocycles; see Section \ref{Sec_Prop_T}. Very recently, Dumas \cite{Dum} showed that Property $\mathrm{(T_P)}$ and Property $\mathrm{(TTT)}$ are equivalent. Using this fact, we prove the following.

\begin{thm}\label{Thm_T_P_WH}
Let $G$ be a countable group, and let $H$ be an infinite subgroup of $G$ such that the pair $(G,H)$ has relative Property $\mathrm{(TTT)}$. Then $\boldsymbol\Lambda_{\mathrm{WH}}(G)$ belongs to $(1,\infty]$.
\end{thm}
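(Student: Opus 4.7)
I will argue by contradiction, using Dumas's equivalence between Properties $\mathrm{(T_P)}$ and $\mathrm{(TTT)}$. Suppose $\boldsymbol\Lambda_{\mathrm{WH}}(G)=1$: there is a sequence of Herz--Schur multipliers $\varphi_n\in B_2(G)\cap c_0(G)$ with $\varphi_n\to 1$ pointwise and $\|\varphi_n\|_{B_2(G)}\leq 1+\epsilon_n$. By sparsifying I may require $\epsilon_n$ to decay arbitrarily fast, and after a trivial renormalisation, $\varphi_n(e)=1$. The plan is to cook up from the whole sequence a single wq-cocycle $c:G\to\mathcal{H}$ that is unbounded on $H$, contradicting relative $\mathrm{(TTT)}$.

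\textbf{Schur rigidity at the boundary of $B_2$.} The Bozejko--Fendler factorisation gives
\begin{align*}
\varphi_n(t^{-1}s)=\langle P_n(s),Q_n(t)\rangle_{\mathcal{H}_n},\qquad \|P_n(s)\|=\|Q_n(s)\|\leq\sqrt{1+\epsilon_n}.
\end{align*}
Specialising to $s=t$ gives $\langle P_n(s),Q_n(s)\rangle=1$ while $\|P_n(s)\|\|Q_n(s)\|\leq 1+\epsilon_n$, so the Cauchy--Schwarz defect yields $\|P_n(s)-Q_n(s)\|^2=O(\epsilon_n)$ uniformly in $s\in G$. Plugging this back into $\langle P_n(sg),Q_n(s)\rangle=\varphi_n(g)$ produces
\begin{align*}
\|P_n(sg)-P_n(s)\|^2=2\bigl(1-\mathrm{Re}\,\varphi_n(g)\bigr)+O(\sqrt{\epsilon_n}),
\end{align*}
uniformly in $s\in G$. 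Hence each $c_n(s):=P_n(s)-P_n(e)$ is already a wq-cocycle, but on $H$ its norm is bounded by $\sqrt{2}+o(1)$, so a single $\varphi_n$ is insufficient.

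\textbf{Diagonal assembly.} Enumerate $G=\{g_1,g_2,\dots\}$ and, after passing to a further subsequence, impose $\max\{\sqrt{\epsilon_n},\,1-\mathrm{Re}\,\varphi_n(g_k)\}\leq 4^{-n}$ for all $k\leq n$. Fix weights $a_n\to\infty$ with $\sum_n a_n\cdot 4^{-n}<\infty$ (e.g.\ $a_n=2^n$) and set
\begin{align*}
c(s):=\bigoplus_{n\in\N}\sqrt{a_n}\,\bigl(P_n(s)-P_n(e)\bigr)\in\bigoplus_n\mathcal{H}_n.
\end{align*}
The displacement estimate ensures $\sup_{s\in G}\|c(sg)-c(s)\|^2<\infty$ for every $g\in G$, so $c$ is a genuine wq-cocycle. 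On the other hand, since $\varphi_n\in c_0(G)$ and $H$ is infinite, there exist $h_n\in H$ with $\mathrm{Re}\,\varphi_n(h_n)<1/2$, giving $\|c(h_n)\|^2\geq a_n(1+o(1))\to\infty$. Thus $c$ is unbounded on $H$, contradicting relative Property $\mathrm{(TTT)}$.

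\textbf{Main obstacle.} The tension to resolve is that the uniform Schur error $O(\sqrt{\epsilon_n})$ is \emph{independent of $g$}, which forces $\sum_n a_n\sqrt{\epsilon_n}<\infty$, while unboundedness on $H$ demands $a_n\to\infty$. Extracting $(\varphi_n)$ with $\epsilon_n$ tending to $0$ much faster than the pointwise approximation of $1$ is exactly what lets the two constraints coexist; this diagonal bookkeeping is the only delicate step. A slicker variant works with the positive definite kernel $K_n(s,t):=\langle P_n(s),P_n(t)\rangle$, uniformly $O(\sqrt{\epsilon_n})$-close to the $G$-invariant kernel $\varphi_n(t^{-1}s)$, and invokes relative Property $\mathrm{(T_P)}$ directly; this shortcut however relies on the kernel formulation of the hypothesis, and the wq-cocycle route is the more transparent one.
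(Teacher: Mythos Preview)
Your main line of argument has a genuine gap at the wq-cocycle step. You assert that ``the displacement estimate ensures $\sup_{s\in G}\|c(sg)-c(s)\|^2<\infty$ for every $g\in G$, so $c$ is a genuine wq-cocycle,'' but this implication is false as stated. Having uniformly bounded increments in the first variable for each fixed $g$ is strictly weaker than being a wq-cocycle: for instance, on any infinite group with Property $\mathrm{(TTT)}$ (say $\operatorname{SL}_3(\Z)$), the map $c(s)=\ell(s)\,v$ for a word length $\ell$ and a fixed unit vector $v$ satisfies $\|c(sg)-c(s)\|\leq\ell(g)$ for all $s$, yet it is unbounded and hence cannot be a wq-cocycle. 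To salvage your construction you would need to exhibit, for each $s$, a unitary $\pi(s)$ with $\sup_{s,y}\|c(sy)-c(s)-\pi(s)c(y)\|<\infty$; your displacement estimate gives that $\|c(sy)-c(s)\|^2$ and $\|c(y)\|^2$ agree up to $O(1)$, but matching norms is far from producing a single unitary aligning the two families $\{c(sy)-c(s)\}_y$ and $\{c(y)\}_y$ uniformly.

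By contrast, the ``slicker variant'' you describe at the end is correct and in fact is essentially the route the paper takes. Your kernel $K_n(s,t)=\langle P_n(s),P_n(t)\rangle$ differs from the $G$-invariant kernel $(s,t)\mapsto\varphi_n(t^{-1}s)$ by $\langle P_n(s),P_n(t)-Q_n(t)\rangle$, which has $V(G)$-norm $O(\sqrt{\epsilon_n})$; after normalising, one obtains a normalised positive definite kernel satisfying both hypotheses of relative Property $\mathrm{(T_P)}$, and the conclusion $|\varphi_{n_0}(h)-1|<\tfrac12$ for all $h\in H$ contradicts $\varphi_{n_0}\in c_0(G)$. The paper proceeds similarly but first invokes Knudby's structure theorem (Theorem~\ref{Thm_Knudby}) to obtain a single map $R:G\to\mathcal{H}$ with $\|R(x)-R(y)\|^2=\psi(y^{-1}x)+O(1)$ for a proper $\psi$, and then applies $\mathrm{(T_P)}$ to the normalised kernels $\theta_n(x,y)=e^{-\|R(x)-R(y)\|^2/n}$. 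Your $\mathrm{(T_P)}$ variant is arguably more direct since it bypasses Knudby's theorem and the Fock-space calculus; the paper's route, on the other hand, produces automatically normalised kernels and makes the almost-invariance estimate more transparent via the bounded map $S$.
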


Property $\mathrm{(TTT)}$ is strictly stronger than Property $\mathrm{(T)}$, as witnessed by the fact that hyperbolic groups are a-$\mathrm{TTT}$-menable, which is a strong negation of Property $\mathrm{(TTT)}$; see \cite{Oza}. However, these properties are equivalent for some semidirect products; see \cite[Proposition 3]{Oza}. Using this fact, we obtain the following.

\begin{cor}\label{Cor_semidir}
Let $G$ be a countable group such that $G=\Gamma\ltimes A$, where $A$ is infinite and abelian. If the pair $(G,A)$ has relative Property $\mathrm{(T)}$, then $\boldsymbol\Lambda_{\mathrm{WH}}(G)$ belongs to $(1,\infty]$.
\end{cor}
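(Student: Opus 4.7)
The plan is to reduce Corollary \ref{Cor_semidir} to Theorem \ref{Thm_T_P_WH}. Since $A$ is infinite, it is enough to show that the pair $(G,A)$ has relative Property $\mathrm{(TTT)}$; then Theorem \ref{Thm_T_P_WH}, applied with $H=A$, immediately gives $\boldsymbol\Lambda_{\mathrm{WH}}(G)\in(1,\infty]$.

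To upgrade relative Property $\mathrm{(T)}$ of $(G,A)$ to relative Property $\mathrm{(TTT)}$, I would apply the semidirect-product argument of Ozawa \cite[Proposition 3]{Oza}, whose effectiveness relies on commutativity of the normal subgroup. In brief, Bochner's theorem identifies positive definite functions on the abelian group $A$ with positive bounded measures on the Pontryagin dual $\widehat{A}$, on which $\Gamma$ acts. The relative $\mathrm{(T)}$ assumption for $(G,A)$ becomes a spectral gap near the trivial character of $\widehat{A}$, and this spectral gap is strong enough to force uniform boundedness on $A$ of the wq-cocycles arising from positive definite kernels on $G$; this is exactly what relative $\mathrm{(TTT)}$ asks for. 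Although \cite[Proposition 3]{Oza} is phrased for the absolute properties on $G=\Gamma\ltimes A$, the argument is local on $A$ and adapts to the relative setting without modification, so no new input is required beyond checking that Ozawa's reduction goes through when the positive definite functions on $G$ are only assumed to converge pointwise (rather than having $G$ itself possess Property $\mathrm{(T)}$).

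The main obstacle lies precisely in this passage from spectral data on $\widehat{A}$ to boundedness of wq-cocycles, where the abelianness of $A$ is essential. Indeed, the implication from $\mathrm{(T)}$ to $\mathrm{(TTT)}$ fails dramatically in general, as evidenced by hyperbolic groups with Property $\mathrm{(T)}$ being a-$\mathrm{TTT}$-menable; so the corollary genuinely exploits the abelian normal subgroup structure. Once relative $\mathrm{(TTT)}$ for $(G,A)$ has been obtained in this way, one invokes Theorem \ref{Thm_T_P_WH} to finish.
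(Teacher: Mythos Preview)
Your approach is correct and essentially identical to the paper's: invoke \cite[Proposition 3]{Oza} to upgrade relative Property $\mathrm{(T)}$ for $(G,A)$ to relative Property $\mathrm{(TTT)}$, then apply Theorem \ref{Thm_T_P_WH} with $H=A$. The only difference is that the paper applies Ozawa's proposition directly without the caveat you raise about adapting from absolute to relative properties, since \cite[Proposition 3]{Oza} is already stated in the relative form for semidirect products $\Gamma\ltimes A$ with $A$ abelian.
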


In \cite[Corollary 4.8]{Dum}, Property $\mathrm{(TTT)}$ was established for lattices in higher rank algebraic groups. Hence, by Theorem \ref{Thm_T_P_WH}, such lattices satisfy $\boldsymbol\Lambda_{\mathrm{WH}}(\Gamma)>1$. It turns out that something much stronger is true. The following result is a consequence of \cite{HaaKnu}, \cite{LafdlS} and \cite{Lia}. Since it has not appeared explicitly in the literature, we include it here together with a proof. For details on algebraic groups, we refer the reader to \cite[Chapter 1]{Mar}.

\begin{thm}\label{Thm_alg_grps}
Let $\mathbb{K}$ be a local field, and let $G$ be a connected, almost $\mathbb{K}$-simple $\mathbb{K}$-group with $\operatorname{rank}_{\mathbb{K}}G\geq 2$. Then $G(\mathbb{K})$ does not have the weak Haagerup property. The same holds for any lattice in it.
\end{thm}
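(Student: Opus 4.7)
\emph{Proof plan.} The strategy is threefold: (i) find a closed rank-$2$ subgroup $H\leq G(\mathbb{K})$ of a special form; (ii) invoke the known failure of the weak Haagerup property for such $H$, and propagate it to $G(\mathbb{K})$ via a closed-subgroup heredity principle; and (iii) transfer between $G(\mathbb{K})$ and a lattice $\Gamma$. For (i), the structure theory of semisimple algebraic groups (cf.\ \cite[Chapter 1]{Mar}) shows that the relative root system of $G$, being irreducible of rank $\geq 2$, contains a rank-$2$ sub-root system of type $A_2$, $B_2=C_2$, or $G_2$. In the $G_2$ case, the long roots form an $A_2$ subsystem, so we may assume the sub-root system is of type $A_2$ or $B_2$. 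The subgroup $H$ of $G(\mathbb{K})$ generated by the root subgroups attached to this sub-root system is a closed subgroup locally isomorphic to $\mathrm{SL}_3(\mathbb{K})$ or $\mathrm{Sp}_4(\mathbb{K})$.

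For (ii), the failure of the weak Haagerup property for $\mathrm{SL}_3(\mathbb{K})$ and $\mathrm{Sp}_4(\mathbb{K})$ over a local field $\mathbb{K}$ is obtained by combining the lower bounds on Herz--Schur multiplier norms of $K$-biinvariant functions from \cite{LafdlS} (archimedean case) and \cite{Lia} (non-archimedean case) with the refinement of \cite{HaaKnu}, which upgrades these bounds from an obstruction to weak amenability into an obstruction to the weak Haagerup property itself. Since $H$ is closed in $G(\mathbb{K})$, restriction of functions induces a contraction $B_2(G(\mathbb{K}))\to B_2(H)$ and preserves the $C_0$ condition (sequences going to infinity in $H$ go to infinity in $G(\mathbb{K})$), so
\begin{align*}
\boldsymbol\Lambda_{\mathrm{WH}}(H)\leq \boldsymbol\Lambda_{\mathrm{WH}}(G(\mathbb{K})),
\end{align*}
and the failure for $H$ forces $\boldsymbol\Lambda_{\mathrm{WH}}(G(\mathbb{K}))=+\infty$.

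For (iii), I will invoke the lattice--ambient transfer of the weak Haagerup constant established in \cite{HaaKnu}: for any lattice $\Gamma\leq G(\mathbb{K})$, one has $\boldsymbol\Lambda_{\mathrm{WH}}(\Gamma)=\boldsymbol\Lambda_{\mathrm{WH}}(G(\mathbb{K}))$, so the failure for $G(\mathbb{K})$ passes to $\Gamma$. The principal technical obstacle is precisely this lattice transfer, which requires an induction-of-multipliers argument relating $C_0$ Herz--Schur multipliers on $\Gamma$ to $C_0$ Herz--Schur multipliers on $G(\mathbb{K})$ with controlled $B_2$-norms. The rank-$2$ reduction in (i) is classical and the closed-subgroup inheritance in (ii) is a standard property of Herz--Schur multipliers; only (iii) is genuinely deep, and it is the content of the result from \cite{HaaKnu} that must be cited.
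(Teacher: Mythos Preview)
Your proposal is correct and follows essentially the same route as the paper: reduce to a closed rank-$2$ subgroup of type $A_2$ or $C_2$, invoke the known failure of the weak Haagerup property for $\mathrm{SL}_3(\mathbb{K})$ and $\mathrm{Sp}_4(\mathbb{K})$ from \cite{LafdlS}, \cite{HaadLa}, \cite{Lia} and \cite{HaaKnu}, use heredity of $\boldsymbol\Lambda_{\mathrm{WH}}$ to closed subgroups, and then transfer to lattices. Two small points: the paper cites \cite{Knu2} (rather than \cite{HaaKnu}) for both the closed-subgroup heredity and the lattice transfer of $\boldsymbol\Lambda_{\mathrm{WH}}$, and since your subgroup $H$ is only \emph{locally} isomorphic to $\mathrm{SL}_3(\mathbb{K})$ or $\mathrm{Sp}_4(\mathbb{K})$, you also need the invariance of $\boldsymbol\Lambda_{\mathrm{WH}}$ under quotients by finite normal subgroups, which the paper likewise draws from \cite{Knu2}.
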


This paper is organised as follows. In Section \ref{S_prelim}, we recall the main definitions and properties studied in this manuscript. In Section \ref{S_main}, we prove Theorem \ref{Thm_T_P_WH} and Corollary \ref{Cor_semidir}. Finally, in Section \ref{S_alg_g}, we discuss algebraic groups, and prove Theorem \ref{Thm_alg_grps}.

\subsection*{Acknowledgements}
I'm grateful to Mikael de la Salle for his insightful comments and suggestions.

\section{{\bf Preliminaries}}\label{S_prelim}

\subsection{Schur multipliers}
We begin by reviewing some facts about Schur multipliers and positive definite kernels; for more detailed presentations, we refer the reader to \cite[Chapter 5]{Pis} and \cite[Appendix C]{BedlHVa}. Let $X$ be a set. We say that $\theta:X\times X\to\C$ is a \emph{Schur multiplier} on $X$ if there is a Hilbert space $\mathcal{H}$ and bounded maps $P,Q:X\to\mathcal{H}$ such that
\begin{align}\label{Schur_mult}
\theta(x,y)=\langle P(x),Q(y)\rangle,\quad\forall x,y\in X.
\end{align}
We denote by $V(X)$ the space of Schur multipliers on $X$, and we endow it with the norm
\begin{align*}
\|\theta\|_{V(X)}=\inf\left(\sup_{x\in X}\|P(x)\|\right)\left(\sup_{y\in X}\|Q(y)\|\right),
\end{align*}
where the infimum is taken over all the decompositions as in \eqref{Schur_mult}. In the particular case in which \eqref{Schur_mult} can be obtained with $P=Q$, we say that $\theta$ is a \emph{positive definite kernel} on $X$. In addition, we say that $\theta$ is normalised if $\theta(x,x)=1$ for all $x\in X$. We will make use of the following basic fact; see \cite[Theorem C.3.2]{BedlHVa}.

\begin{prop}\label{Prop_pd_semig}
Let $\xi:X\to\mathcal{H}$ be a map from a set $X$ to a Hilbert space $\mathcal{H}$. For every $t>0$, the map
\begin{align*}
(x,y)\in X\times X\ \longmapsto\ e^{-t\|\xi(x)-\xi(y)\|^2}
\end{align*}
is a normalised, positive definite kernel on $X$.
\end{prop}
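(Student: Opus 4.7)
The plan is to factor the kernel into an explicit product of kernels that are manifestly positive definite, and then invoke the standard closure properties of positive definite kernels (sums, pointwise limits, and the Schur product theorem).

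Expanding the squared norm, for $x,y\in X$ one has $\|\xi(x)-\xi(y)\|^2 = \|\xi(x)\|^2 + \|\xi(y)\|^2 - \langle\xi(x),\xi(y)\rangle - \langle\xi(y),\xi(x)\rangle$, and hence
\[
e^{-t\|\xi(x)-\xi(y)\|^2} = g(x)\,g(y)\cdot e^{t\langle\xi(x),\xi(y)\rangle}\cdot e^{t\langle\xi(y),\xi(x)\rangle},
\]
where $g(x) := e^{-t\|\xi(x)\|^2}$. The first factor $(x,y)\mapsto g(x)g(y)$ is a rank-one positive definite kernel: it fits the defining formula \eqref{Schur_mult} with $P=Q=g$ viewed as a map into $\C$.

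For each of the remaining two factors I would use the Taylor expansion
\[
e^{t\langle\xi(x),\xi(y)\rangle} = \sum_{n=0}^\infty \frac{t^n}{n!}\,\langle\xi(x),\xi(y)\rangle^n = \sum_{n=0}^\infty \frac{t^n}{n!}\,\bigl\langle\xi(x)^{\otimes n},\xi(y)^{\otimes n}\bigr\rangle_{\mathcal{H}^{\otimes n}}.
\]
Each summand is positive definite by \eqref{Schur_mult} with $P=Q=\xi^{\otimes n}$, and the partial sums converge pointwise. Since positive definite kernels form a cone stable under nonnegative linear combinations and pointwise limits (one simply passes $\sum_{i,j} c_i\bar c_j\theta_n(x_i,x_j)\ge 0$ to the limit), the whole series is a positive definite kernel. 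The same argument, applied to the conjugate inner product $\langle\xi(y),\xi(x)\rangle$, handles the third factor.

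Finally, the product of positive definite kernels is positive definite by the Schur product theorem (obtained by tensoring the Hilbert-space realisations in \eqref{Schur_mult}), so the full factorisation yields a positive definite kernel. Normalisation is immediate since on the diagonal the exponent vanishes and the value is $e^0 = 1$. The only delicate point is the justification that pointwise limits of positive definite kernels remain positive definite; this is routine but should be stated explicitly. As an alternative path, one could first show that $\|\xi(x)-\xi(y)\|^2$ is a conditionally negative definite kernel on $X$ and then invoke Schoenberg's theorem, but the direct factorisation above keeps the proof self-contained and is the most economical route.
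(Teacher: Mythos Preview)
Your argument is correct. The paper does not actually prove this proposition; it simply records it as a standard fact with a reference to \cite[Theorem~C.3.2]{BedlHVa}, so there is no in-paper proof to compare with. It is worth noting, though, that the explicit Hilbert-space realisation lurking behind your power-series computation---namely the full Fock space $\mathcal{F}=\bigoplus_{n\ge 0}\mathcal{H}^{\otimes n}$ together with the map $\operatorname{E}(\xi)=e^{-\|\xi\|^2}\operatorname{EXP}(\sqrt{2}\xi)$ satisfying $\langle\operatorname{E}(\xi),\operatorname{E}(\eta)\rangle=e^{-\|\xi-\eta\|^2}$---does appear explicitly later in the paper, inside the proof of Theorem~\ref{Thm_T_P_WH}. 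That construction is exactly the ``tensoring of realisations'' you invoke via the Schur product theorem, summed over all tensor powers; so your route and the paper's later manipulations are two faces of the same computation. A small cosmetic remark: since the target kernel is real-valued, one can absorb the two exponential factors into a single $e^{2t\operatorname{Re}\langle\xi(x),\xi(y)\rangle}$ and work over the underlying real Hilbert space, avoiding the separate treatment of the conjugate factor.
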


\subsection{Weak amenability and the weak Haagerup property}\label{Sec_WA_WH}
Now let $G$ be a locally compact, second countable group. We say that a continuous function $\varphi:G\to\C$ is a Herz--Schur multiplier if the map
\begin{align*}
(x,y)\in G\times G\ \longmapsto\ \varphi(y^{-1}x)
\end{align*}
is a Schur multiplier on $G$. We denote by $B_2(G)$ the space of Herz--Schur multipliers on $G$, and we endow it with the norm inherited from $V(G)$, for which it becomes a Banach algebra. We say that $G$ is weakly amenable if there is a sequence of compactly supported Herz--Schur multipliers $\varphi_n\in B_2(G)$ and a constant $C\geq 1$ such that
\begin{align*}
\lim_{n\to\infty}\sup_{x\in K}|\varphi_n(x)-1|=0,
\end{align*}
for every compact subset $K\subseteq G$, and
\begin{align*}
\sup_{n\in\N}\|\varphi_n\|_{B_2(G)}\leq C.
\end{align*}
The Cowling--Haagerup constant $\boldsymbol\Lambda(G)$ is the infimum of all $C\geq 1$ such that the condition above holds.

More generally, $G$ has the \textit{weak Haagerup property} if there is a sequence $\varphi_n\in B_2(G)$ such that $\varphi_n$ vanishes at infinity for all $n$, and a constant $C\geq 1$ such that
\begin{align*}
\lim_{n\to\infty}\sup_{x\in K}|\varphi_n(x)-1|=0,
\end{align*}
for every compact subset $K\subseteq G$, and
\begin{align*}
\sup_{n\in\N}\|\varphi_n\|_{B_2(G)}\leq C.
\end{align*}
The weak Haagerup constant $\boldsymbol\Lambda_{\mathrm{WH}}(G)$ is the infimum of all $C\geq 1$ such that the condition above holds. It readily follows that $\boldsymbol\Lambda_{\mathrm{WH}}(G)\leq \boldsymbol\Lambda(G)$. We point out that these constants may very well be different. Even more, there are groups $G$ such that $\boldsymbol\Lambda_{\mathrm{WH}}(G)=1$ and $\boldsymbol\Lambda(G)=\infty$. One example of such a group is the wreath product $\Z\wr\F_2$, where $\F_2$ is the free group on 2 generators; see \cite{Knu2} for details.

The weak Haagerup property was introduced in \cite{Knu} with the goal of studying the connection between weak amenability and the Haagerup property. The following result will be essential to our purposes; see \cite[Theorem 1.2]{Knu} and \cite[Proposition 4.3]{Knu}.

\begin{thm}[Knudby]\label{Thm_Knudby}
Let $G$ be a countable discrete group with $\boldsymbol\Lambda_{\mathrm{WH}}(G)=1$. Then there exist a function $\psi:G\to[0,\infty)$, a Hilbert space $\mathcal{H}$, and maps $R,S:G\to\mathcal{H}$ such that
\begin{align}\label{id_Knudby}
\psi(y^{-1}x)=\|R(x)-R(y)\|^2+\|S(x)+S(y)\|^2,\quad\forall x,y\in G,
\end{align}
and $\displaystyle\lim_{x\to\infty}\psi(x)=\infty$.
\end{thm}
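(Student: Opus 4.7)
The plan is a Schoenberg-type argument. Proposition~\ref{Prop_pd_semig} already handles the easy direction: any $\psi$ satisfying \eqref{id_Knudby} with $\psi\to\infty$ yields Herz--Schur multipliers $e^{-t\psi(y^{-1}x)}=e^{-t\|R(x)-R(y)\|^{2}}\,e^{-t\|S(x)+S(y)\|^{2}}$ of norm $\leq 1$, which vanish at infinity and tend pointwise to $1$ as $t\to 0^{+}$, forcing $\boldsymbol\Lambda_{\mathrm{WH}}(G)=1$. So the real content of the theorem is the converse, which I would prove by manufacturing $\psi$, $R$, $S$ explicitly from an approximating sequence of Herz--Schur multipliers.

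Fix a sequence $\varphi_{n}\in B_{2}(G)\cap C_{0}(G)$ with $\varphi_{n}\to 1$ pointwise and $\|\varphi_{n}\|_{B_{2}}\to 1$. Replacing $\varphi_{n}$ by $\tfrac{1}{2}(\varphi_{n}+\overline{\check\varphi_{n}})$, which preserves all the relevant properties and the Herz--Schur norm bound, I may assume every $\varphi_{n}$ is Hermitian: $\varphi_{n}(g^{-1})=\overline{\varphi_{n}(g)}$. Pick a Schur decomposition $\varphi_{n}(y^{-1}x)=\langle P_{n}(x),Q_{n}(y)\rangle$ in some $\mathcal{H}_{n}$, and then replace the pair $(P_{n},Q_{n})$ by $\bigl((P_{n}\oplus Q_{n})/\sqrt 2,\,(Q_{n}\oplus P_{n})/\sqrt 2\bigr)$ in $\mathcal{H}_{n}\oplus\mathcal{H}_{n}$. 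This produces a new decomposition still representing $\varphi_{n}$, in which $\|P_{n}(x)\|=\|Q_{n}(x)\|$ for every $x$, and for which the Hermitian identity $\langle Q_{n}(x),P_{n}(y)\rangle=\overline{\langle P_{n}(y),Q_{n}(x)\rangle}=\overline{\varphi_{n}(x^{-1}y)}=\varphi_{n}(y^{-1}x)$ gives the crucial \emph{flipped} pairing $\langle Q_{n}(x),P_{n}(y)\rangle=\varphi_{n}(y^{-1}x)$. A one-dimensional padding on complementary coordinates then raises both $\|P_{n}(x)\|$ and $\|Q_{n}(x)\|$ to a common constant $c_{n}$, with $c_{n}^{2}\to 1$, without altering either inner product.

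Setting $R_{n}=(P_{n}+Q_{n})/2$ and $S_{n}=(P_{n}-Q_{n})/2$, the identities $(R_{n}(x)-R_{n}(y))\pm(S_{n}(x)+S_{n}(y))$ equal $P_{n}(x)-Q_{n}(y)$ and $Q_{n}(x)-P_{n}(y)$ respectively; combined with the parallelogram law this gives
\[\|R_{n}(x)-R_{n}(y)\|^{2}+\|S_{n}(x)+S_{n}(y)\|^{2}=2c_{n}^{2}-2\operatorname{Re}\varphi_{n}(y^{-1}x)=:\psi_{n}(y^{-1}x)\geq 0,\]
which is a bona fide function of $y^{-1}x$, tends to $0$ pointwise in $n$, and satisfies $\lim_{g\to\infty}\psi_{n}(g)=2c_{n}^{2}$ since $\varphi_{n}\in C_{0}(G)$. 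After passing to a subsequence with $\psi_{n}(g)\leq 2^{-n}$ on a prescribed finite exhausting family $F_{n}\subset G$, the pointwise sum $\psi=\sum_{n}\psi_{n}$ is everywhere finite and tends to $\infty$ at infinity (outside a sufficiently large finite set, each of the first $N$ summands contributes at least $1$). On $\mathcal{H}=\bigoplus_{n}\mathcal{H}_{n}$ the assignments $R(x)=(R_{n}(x)-R_{n}(e))_{n}$ and $S(x)=(S_{n}(x))_{n}$ are well defined, as the bounds $\|R_{n}(x)-R_{n}(e)\|^{2}\leq\psi_{n}(x)$ and $\|S_{n}(x)\|^{2}=\psi_{n}(e)/4$ are summable by the subsequence choice; termwise summation of the identities for the $\psi_{n}$ then yields \eqref{id_Knudby} for $\psi$.

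The main obstacle is the simultaneous arrangement in the second paragraph: one needs \emph{both} $\|P_{n}(x)\|=\|Q_{n}(x)\|=c_{n}$ constant in $x$ \emph{and} the flipped pairing $\langle Q_{n}(x),P_{n}(y)\rangle$ to still represent $\varphi_{n}(y^{-1}x)$. Either property alone is insufficient, since without both the parallelogram expression defining $\psi_{n}$ is merely a kernel on $G\times G$ with no translation invariance, and so cannot descend to a function of $y^{-1}x$. The Hermitian symmetrisation followed by the averaging-plus-padding manoeuvre is precisely what delivers both conditions together.
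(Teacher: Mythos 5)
The paper does not actually prove this statement: it is imported verbatim from Knudby (\cite[Theorem 1.2]{Knu} together with \cite[Proposition 4.3]{Knu}), so there is no in-paper proof to compare against. Your blind reconstruction is, as far as I can check, correct and complete, and it follows essentially the same route as Knudby's original argument (his Propositions 4.2--4.3 plus a summation over a suitable subsequence). The two points that carry all the weight are exactly the ones you isolate: (i) after Hermitian symmetrisation, the doubled decomposition $\bigl((P_n\oplus Q_n)/\sqrt 2,\,(Q_n\oplus P_n)/\sqrt 2\bigr)$ plus padding gives $\|P_n(x)\|=\|Q_n(x)\|=c_n$ constant \emph{and} the flipped pairing $\langle Q_n(x),P_n(y)\rangle=\varphi_n(y^{-1}x)$, without which $\|Q_n(x)-P_n(y)\|^2$ would not descend to a function of $y^{-1}x$; and (ii) the parallelogram identity applied to $R_n=(P_n+Q_n)/2$, $S_n=(P_n-Q_n)/2$ turns $2c_n^2-2\operatorname{Re}\varphi_n(y^{-1}x)$ into the required two-term splitting. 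The subsequent summability and properness checks ($\|R_n(x)-R_n(e)\|^2\le\psi_n(x)$, $\|S_n(x)\|^2=\psi_n(e)/4$, each $\psi_n\to 2c_n^2$ at infinity) all go through. Two cosmetic remarks: your opening paragraph proves a converse that the stated theorem does not claim, so it can be dropped; and the assertion $c_n^2\to 1$ deserves the one-line justification $|\varphi_n(e)|\le c_n^2\le\|\varphi_n\|_{B_2(G)}+\varepsilon_n$ (after normalising the initial decomposition so that $\sup_x\|P_n(x)\|=\sup_y\|Q_n(y)\|$), together with the observation that $2c_n^2>1$ may fail for finitely many $n$, which one simply discards before running the properness argument.
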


Observe that the map $S:G\to\mathcal{H}$ given by Theorem \ref{Thm_Knudby} takes values in the sphere of radius $\frac{1}{2}\sqrt{\psi(1)}$ in $\mathcal{H}$.

\subsection{Property $\mathrm{(T_P)}$ and Property $\mathrm{(TTT)}$}\label{Sec_Prop_T}
Now we discuss two strengthenings of Property $\mathrm{(T)}$ introduced in \cite{Oza}. For simplicity, we shall restrict ourselves to discrete groups. Let $G$ be a group and let $\theta:G\times G\to\C$. For every $s\in G$, we will denote by $s\cdot\theta$ the map
\begin{align*}
s\cdot\theta(x,y)=\theta(s^{-1}x,s^{-1}y),\quad\forall x,y\in G.
\end{align*}
Now let $H$ be a subgroup of $G$. We say that the pair $(G,H)$ has \emph{relative Property $\mathrm{(T_P)}$} if, for every $\varepsilon>0$, there is a finite subset $K\subset G$ and $\delta>0$ such that, for every normalised positive definite kernel $\theta:G\times G\to\C$ satisfying
\begin{align*}
\sup_{s\in G}\|s\cdot\theta-\theta\|_{V(G)}&<\delta & \text{and} & & \sup_{x^{-1}y\in K}|\theta(x,y)-1|<\delta,
\end{align*}
one has
\begin{align*}
\sup_{x,y\in H}|\theta(x,y)-1|<\varepsilon.
\end{align*}

Let $\mathcal{H}$ be a Hilbert space and let $\mathbf{U}(\mathcal{H})$ denote its unitary group. We say that $b:G\to\mathcal{H}$ is a \emph{wq-cocycle} if there is a map $\pi:G\to\mathbf{U}(\mathcal{H})$ (not necessarily a representation) such that
\begin{align*}
\sup_{x,y\in G}\|b(xy)-\pi(x)b(y)-b(x)\| < \infty.
\end{align*}
We say that the pair $(G,H)$ has \emph{relative Property $\mathrm{(TTT)}$} if every wq-cocycle on $G$ is bounded on $H$. If $H=G$, we simply say that $G$ has Property $\mathrm{(TTT)}$. The following was proved in \cite[Theorem 1]{Oza} and \cite[Theorem 2.7]{Dum}.

\begin{thm}[Ozawa, Dumas]\label{Thm_Oza_Dum}
Let $G$ be a countable group, and let $H$ be a subgroup of $G$. Then the pair $(G,H)$ has relative Property $\mathrm{(TTT)}$ if and only if it has relative Property $\mathrm{(T_P)}$.
\end{thm}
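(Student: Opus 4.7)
My plan is to prove the two implications by rather different means.

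\textbf{Direction} $\mathrm{(T_P)}\Rightarrow\mathrm{(TTT)}$. Starting from a wq-cocycle $b:G\to\mathcal{H}$ with companion map $\pi:G\to\mathbf{U}(\mathcal{H})$ and defect $D=\sup_{x,y}\|b(xy)-\pi(x)b(y)-b(x)\|$, I form the family of normalized positive definite kernels $\theta_t(x,y)=e^{-t\|b(x)-b(y)\|^2}$ for $t>0$, whose positive definiteness is given by Proposition \ref{Prop_pd_semig}. Two points need verification. First, $\theta_t\to 1$ uniformly on finite sets as $t\to 0^+$: the wq-cocycle identity gives $\|b(xg)-b(x)\|\leq\|b(g)\|+D$ uniformly in $x$ for each fixed $g\in G$, so the exponent is uniformly bounded on any finite set of arguments. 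Second, and crucially, $\sup_{s\in G}\|s\cdot\theta_t-\theta_t\|_{V(G)}=O(\sqrt{t})$ as $t\to 0^+$. For this I realize $\theta_t$ on the symmetric Fock space via the exponential-vector formula $\theta_t(x,y)=\langle\tilde\xi_t(x),\tilde\xi_t(y)\rangle$ with $\tilde\xi_t(x)=e^{-t\|b(x)\|^2}\exp(\sqrt{2t}\,b(x))$ sitting on the unit sphere; a genuine cocycle would make $\tilde\xi_t(s^{-1}x)=U_s\tilde\xi_t(x)$ for the Weyl-type unitary $U_s$ associated to the pair $(\pi(s^{-1}),b(s^{-1}))$, and the defect $D$ turns this into $\|\tilde\xi_t(s^{-1}x)-U_s\tilde\xi_t(x)\|=O(\sqrt{t})$ uniformly in $x$, yielding the desired $V(G)$-bound. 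Applying $\mathrm{(T_P)}$ with $\varepsilon=1/2$ (say) and its associated $(K,\delta)$, then choosing $t$ small, forces $|\theta_t(x,y)-1|<1/2$ on $H\times H$, i.e., $\|b(x)-b(y)\|^2\leq(\log 2)/t$, so $b$ is bounded on $H$.

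\textbf{Direction} $\mathrm{(TTT)}\Rightarrow\mathrm{(T_P)}$. I argue by contrapositive. Assume $\mathrm{(T_P)}$ fails at some $\varepsilon_0>0$, and extract normalized positive definite kernels $\theta_n$ with $\sup_s\|s\cdot\theta_n-\theta_n\|_{V(G)}\leq\delta_n$ (with $\delta_n\to 0$ as rapidly as desired, say $\delta_n\leq 4^{-n}$), $\theta_n\to 1$ pointwise on a sequence of finite sets exhausting $G$, yet $\sup_{x,y\in H}|\theta_n(x,y)-1|\geq\varepsilon_0$. Via Kolmogorov, write $\theta_n(x,y)=\langle\xi_n(x),\xi_n(y)\rangle$ with $\|\xi_n(x)\|=1$. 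The $V(G)$-smallness of $s\cdot\theta_n-\theta_n$ means the would-be isometry $V_s:\xi_n(x)\mapsto\xi_n(sx)$ is approximately well-defined; by a Stinespring-type dilation, construct maps $\pi_n:G\to\mathbf{U}(\tilde{\mathcal{H}}_n)$ on enlarged Hilbert spaces together with extensions $\tilde\xi_n:G\to\tilde{\mathcal{H}}_n$ (still realizing $\theta_n$) such that $\sup_x\|\pi_n(s)\tilde\xi_n(x)-\tilde\xi_n(sx)\|\leq C\sqrt{\delta_n}$ for each $s\in G$. Then $b_n(x)=\tilde\xi_n(x)-\tilde\xi_n(1)$ is a wq-cocycle for $\pi_n$ with defect $O(\sqrt{\delta_n})$ and uniform bound $2$, and one can exhibit $h_n\in H$ with $\|b_n(h_n)\|\geq c$ for some $c=c(\varepsilon_0)>0$ (take $h_n=y_n^{-1}x_n$ after reducing to real parts of kernels to convert $|\theta_n(x_n,y_n)-1|\geq\varepsilon_0$ to a genuine distance lower bound). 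Finally, assemble $\pi=\bigoplus_n\pi_n$ on $\bigoplus_n\tilde{\mathcal{H}}_n$ and $b=\bigoplus_n\lambda_n b_n$ with weights $\lambda_n\to\infty$ satisfying $\sum_n\lambda_n^2\delta_n<\infty$ (abundantly feasible given the rapid decay of $\delta_n$): then $b$ is a wq-cocycle on $G$ for $\pi$ while $\|b(h_n)\|\geq c\lambda_n\to\infty$, contradicting relative Property $\mathrm{(TTT)}$.

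\textbf{Main obstacle.} The hardest step is the dilation in the converse direction: the hypothesis controls only a Schur multiplier norm, which is strictly weaker than any pointwise or $\ell^\infty$ control, and one must nonetheless construct unitaries on an enlarged Hilbert space implementing the $G$-action within quantitative error $O(\sqrt{\delta_n})$. Once that bridge is built, the block-diagonal assembly into a single unbounded wq-cocycle is a routine balancing of decay rates. This passage between near-invariant kernels and approximate cocycles is the essential content of the Ozawa--Dumas theorem, echoing the classical Delorme--Guichardet dictionary but in the non-equivariant setting suited to the properties $\mathrm{(T_P)}$ and $\mathrm{(TTT)}$.
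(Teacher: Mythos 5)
First, a point of comparison: the paper gives no proof of this statement. It is quoted from \cite{Oza} (Theorem 1, for the implication $\mathrm{(T_P)}\Rightarrow\mathrm{(TTT)}$) and \cite{Dum} (Theorem 2.7, for the converse), so there is no internal argument to measure yours against. Your first direction is essentially Ozawa's original argument and is sound: the kernels $\theta_t(x,y)=e^{-t\|b(x)-b(y)\|^2}$ are normalised positive definite by Proposition \ref{Prop_pd_semig}; the finite-set condition follows from $\|b(xg)-b(x)\|\le\|b(g)\|+D$; and the bound $\sup_s\|s\cdot\theta_t-\theta_t\|_{V(G)}=O(\sqrt{t}\,D)$ follows because the affine isometry $\xi\mapsto\pi(s^{-1})\xi+\sqrt{t}\,b(s^{-1})$ of the real Hilbert space induces a unitary of the Fock space moving $\operatorname{E}(\sqrt{t}\,b(x))$ to within $\sqrt{2t}\,D$ of $\operatorname{E}(\sqrt{t}\,b(s^{-1}x))$. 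This is the same computation the paper itself performs in the proof of Theorem \ref{Thm_T_P_WH}, with $b$ in place of $R$.

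The converse is where the content of the theorem lies, and there your proposal has a genuine gap: everything hinges on the sentence ``by a Stinespring-type dilation, construct maps $\pi_n\colon G\to\mathbf{U}(\tilde{\mathcal{H}}_n)$ \dots\ such that $\sup_x\|\pi_n(s)\tilde\xi_n(x)-\tilde\xi_n(sx)\|\le C\sqrt{\delta_n}$,'' which you assert but do not prove. This is not a routine step: the hypothesis $\|s\cdot\theta_n-\theta_n\|_{V(G)}<\delta_n$ controls only a completely bounded norm, and upgrading it to a uniform vector estimate after a common dilation requires a quantitative continuity theorem for the Stinespring representation (for instance the Kretschmann--Schlingemann--Werner bound, applied to the unital completely positive Schur multiplication maps associated to $\theta_n$ and $s\cdot\theta_n$ on $B(\ell^2(G))$, combined with uniqueness of minimal realisations to produce the unitaries $\pi_n(s)$). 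Without that lemma the argument does not close, and it is precisely the bridge whose construction constitutes Dumas's theorem. Two smaller repairs are also needed: (i) the well-definedness of $b(x)=\bigoplus_n\lambda_nb_n(x)$ is not automatic, since $\lambda_n\to\infty$ while $\|b_n(x)\|\le 2$; it requires the quantitative decay $\|b_n(x)\|^2\le 2\delta_n$ for all $n$ with $x\in K_n$, which you get from choosing the $\theta_n$ so that $\sup_{x^{-1}y\in K_n}|\theta_n(x,y)-1|<\delta_n$ with $K_n$ exhausting $G$ (the same condition $\sum_n\lambda_n^2\delta_n<\infty$ then does double duty); (ii) the lower bound $\|b_n(h_n)\|\ge c(\varepsilon_0)$ follows directly from the inequality $|\theta-1|^2\le 4(1-\mathrm{Re}\,\theta)=2\|\xi(x)-\xi(y)\|^2$ for unit vectors, so no reduction to real parts is necessary. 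With the dilation lemma supplied, your architecture does yield the theorem.
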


\section{{\bf Proof of the main result}}\label{S_main}

Now we prove Theorem \ref{Thm_T_P_WH}. The proof is inspired by that of \cite[Theorem 1]{Oza}.

\begin{proof}[Proof of Theorem \ref{Thm_T_P_WH}]
Let $G$ be a countable group, and let $H$ be an infinite subgroup of $G$ such that $(G,H)$ has relative Property $\mathrm{(TTT)}$. By Theorem \ref{Thm_Oza_Dum}, $(G,H)$ has relative Property $\mathrm{(T_P)}$. Assume by contradiction that $\boldsymbol\Lambda_{\mathrm{WH}}(G)=1$, and let $R,S:G\to\mathcal{H}$ be the maps given by Theorem \ref{Thm_Knudby}. For every $n\geq 1$, we define $\theta_n:G\times G\to(0,1]$ by
\begin{align*}
\theta_n(x,y)=e^{-\frac{1}{n}\|R(x)-R(y)\|^2},\quad\forall x,y\in G.
\end{align*}
By Proposition \ref{Prop_pd_semig}, $\theta_n$ is a normalised positive definite kernel for every $n\geq 1$. Observe that we can view $\mathcal{H}$ as a real Hilbert space with inner product $\mathrm{Re}(\langle\cdot,\cdot\rangle)$. Now consider the full Fock Hilbert space
\begin{align*}
\mathcal{F}=\bigoplus_{n\geq 0}\mathcal{H}^{\otimes n},
\end{align*}
where $\mathcal{H}^{\otimes 0}=\R$ and $\mathcal{H}^{\otimes (n+1)}=\mathcal{H}\otimes\mathcal{H}^{\otimes n}$. The exponential map $\operatorname{EXP}:\mathcal{H}\to\mathcal{F}$ is given by
\begin{align*}
\operatorname{EXP}(\xi)=1 \oplus \frac{\xi}{\sqrt{1!}} \oplus \frac{\xi\otimes\xi}{\sqrt{2!}} 
\oplus \frac{\xi\otimes\xi\otimes\xi}{\sqrt{3!}} \oplus \cdots,\quad\forall\xi\in\mathcal{H}.
\end{align*}
We can define a map from $\mathcal{H}$ to the unit sphere of $\mathcal{F}$ by
\begin{align*}
\operatorname{E}(\xi)=e^{-\|\xi\|^2}\operatorname{EXP}(\sqrt{2}\xi),\quad\forall\xi\in\mathcal{H}.
\end{align*}
This map satisfies
\begin{align*}
\langle\operatorname{E}(\xi),\operatorname{E}(\eta)\rangle=e^{-\|\xi-\eta\|^2},\quad\forall\xi,\eta\in\mathcal{H}.
\end{align*}
For each $n\geq 1$, define $P_n,Q_n:G\to\mathcal{F}$ by
\begin{align*}
P_n(x)&=\operatorname{E}\left(\frac{1}{\sqrt{n}}S(x)\right), & Q_n(x)&=\operatorname{E}\left(-\frac{1}{\sqrt{n}}S(x)\right),
\end{align*}
for all $x\in G$. Recall the identity \eqref{id_Knudby} and observe that, for all $s,x,y\in G$, $n\geq 1$,
\begin{align*}
\theta_n(x,y)-s\cdot\theta_n(x,y) &= e^{-\frac{1}{n}\|R(x)-R(y)\|^2} - e^{-\frac{1}{n}\|R(s^{-1}x)-R(s^{-1}y)\|^2}\\
&= e^{-\frac{1}{n}\psi(y^{-1}x)}\left(e^{\frac{1}{n}\|S(x)+S(y)\|^2} - e^{\frac{1}{n}\|S(s^{-1}x)+S(s^{-1}y)\|^2}\right)\\
&= e^{-\frac{1}{n}\psi(y^{-1}x)}\left(\langle P_n(x),Q_n(y)\rangle - \langle P_n(s^{-1}x),Q_n(s^{-1}y)\rangle \right)\\
&= e^{-\frac{1}{n}\psi(y^{-1}x)}\langle P_n(x)-P_n(s^{-1}x),Q_n(y)\rangle\\
&\quad + e^{-\frac{1}{n}\psi(y^{-1}x)}\langle P_n(s^{-1}x),Q_n(y)-Q_n(s^{-1}y)\rangle.
\end{align*}
Furthermore,
\begin{align*}
\| P_n(x)-P_n(s^{-1}x)\|^2 &=2-2\langle P_n(x), P_n(s^{-1}x)\rangle\\
&= 2-2e^{-\frac{1}{n}\|S(x)-S(s^{-1}x)\|^2}\\
&\leq 2-2e^{-\frac{1}{n}\psi(1)}.
\end{align*}
Similarly, we get
\begin{align*}
\| Q_n(y)-Q_n(s^{-1}y)\|^2 \leq 2-2e^{-\frac{1}{n}\psi(1)}.
\end{align*}
On the other hand, by \cite[Proposition 4.3]{Knu},
\begin{align*}
\left\|(x,y)\mapsto e^{-\frac{1}{n}\psi(y^{-1}x)}\right\|_{V(G)}\leq 1.
\end{align*}
All this shows that
\begin{align*}
\sup_{s\in G}\|s\cdot\theta_n-\theta_n\|_{V(G)}\leq 2\sqrt{2}\left(1-e^{-\frac{1}{n}\psi(1)}\right)^{\frac{1}{2}},
\end{align*}
which tends to $0$ as $n$ goes to infinity. On the other hand, for every finite subset $K\subset G$, we can define
\begin{align*}
M_K=\sup_{x\in K}\psi(x),
\end{align*}
which gives us
\begin{align*}
\sup_{x^{-1}y\in K}\|R(x)-R(y)\|^2\leq M_K.
\end{align*}
Equivalently,
\begin{align*}
\sup_{x^{-1}y\in K}|\theta_n(x,y)-1|\leq 1-e^{-\frac{M_K}{n}},
\end{align*}
which again tends to $0$ as $n$ goes to infinity. Hence, since the pair $(G,H)$ has relative Property $\mathrm{(T_P)}$, there is $n_0\geq 1$ such that
\begin{align*}
\sup_{x,y\in H}|\theta_{n_0}(x,y)-1|<\frac{1}{2}.
\end{align*}
In other words,
\begin{align*}
\sup_{x,y\in H}\|R(x)-R(y)\|^2<n_0\log(2),
\end{align*}
which implies that, for all $x\in H$,
\begin{align*}
\psi(x) &= \|R(x)-R(1)\|^2 + \|S(x)+S(1)\|^2\\
&< n_0\log(2) + \psi(1).
\end{align*}
Since $H$ is infinite and $\psi$ is proper, this is not possible. We conclude that $\boldsymbol\Lambda_{\mathrm{WH}}(G)>1$.
\end{proof}

We can now prove Corollary \ref{Cor_semidir}.

\begin{proof}[Proof of Corollary \ref{Cor_semidir}]
Let $G=\Gamma\ltimes A$, where $A$ is infinite and abelian, and assume that the pair $(G,A)$ has relative Property $\mathrm{(T)}$. By \cite[Proposition 3]{Oza}, $(G,A)$ has relative Property $\mathrm{(TTT)}$. Thus, by Theorem \ref{Thm_T_P_WH}, $\boldsymbol\Lambda_{\mathrm{WH}}(G)>1$.
\end{proof}

\section{{\bf Higher rank algebraic groups}}\label{S_alg_g}

Now we discuss algebraic groups and the proof of Theorem \ref{Thm_alg_grps}. We quickly recall the basic definitions that we will need; for a detailed treatment, we refer the reader to \cite[Chapter 1]{Mar}.

Let $\mathbb{K}$ be a local field, and let $G$ be a $\mathbb{K}$-group. We say that $G$ is almost $\mathbb{K}$-simple if every $\mathbb{K}$-closed normal subgroup of $G$ is finite. Let $\operatorname{rank}_{\mathbb{K}}G$ denote the $\mathbb{K}$-rank of $G$, i.e. the common dimension of maximal $\mathbb{K}$-split tori in $G$. We also denote by $G(\mathbb{K})$ the set of $\mathbb{K}$-rational points of $G$, which is a locally compact group. The following was proved in \cite[Corollary 4.8]{Dum}.

\begin{thm}[Dumas]\label{Thm_Dum_ag}
Let $\mathbb{K}$ be a local field, and let $G$ be a connected, almost $\mathbb{K}$-simple $\mathbb{K}$-group with $\operatorname{rank}_{\mathbb{K}}G\geq 2$. If $\Gamma$ is a lattice in $G(\mathbb{K})$, then $\Gamma$ has Property $\mathrm{(TTT)}$.
\end{thm}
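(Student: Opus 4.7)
The plan is to reduce the statement to a known failure of the weak Haagerup property for two concrete rank-$2$ model groups, propagate this failure to $G(\mathbb{K})$ via a closed-subgroup argument, and then pass to any lattice via a standard transfer result. Unlike Theorem \ref{Thm_T_P_WH}, which only gives $\boldsymbol\Lambda_{\mathrm{WH}}>1$, here the goal is $\boldsymbol\Lambda_{\mathrm{WH}}=\infty$, so relative Property $(\mathrm{TTT})$ alone is not enough; we must really appeal to the external results advertised before the theorem.

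\textbf{Step 1 (structural reduction).} Because $G$ is connected, almost $\mathbb{K}$-simple, and $\operatorname{rank}_{\mathbb{K}}G\geq 2$, the relative root system of $G$ with respect to a maximal $\mathbb{K}$-split torus is irreducible of rank $\geq 2$. Such a system contains a closed rank-two subsystem, necessarily of type $A_2$, $B_2=C_2$, $BC_2$, or $G_2$. Using Borel--Tits theory and the classification of $\mathbb{K}$-forms as in \cite[Chapter 1]{Mar}, I would produce inside $G(\mathbb{K})$ a closed subgroup locally isomorphic to $\operatorname{SL}(3,\mathbb{K})$ (in the $A_2$ case) or $\operatorname{Sp}(4,\mathbb{K})$ (in the remaining cases, after selecting a suitable $B_2$-subsystem), for instance as the subgroup generated by the root subgroups attached to a rank-two subsystem of the relative root system.

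\textbf{Step 2 (invoking the black boxes and transfer to $G(\mathbb{K})$).} The combined results of \cite{HaaKnu}, \cite{LafdlS} and \cite{Lia} show that for every local field $\mathbb{K}$, the groups $\operatorname{SL}(3,\mathbb{K})$ and $\operatorname{Sp}(4,\mathbb{K})$ fail to have the weak Haagerup property; that is, their weak Haagerup constants are infinite. Since the restriction of a Herz--Schur multiplier to a closed subgroup is again a Herz--Schur multiplier with no increase of the $B_2$-norm, the weak Haagerup property is inherited by closed subgroups. Combining this with Step~1 forces $\boldsymbol\Lambda_{\mathrm{WH}}(G(\mathbb{K}))=\infty$.

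\textbf{Step 3 (lattices and main obstacle).} For a lattice $\Gamma$ in $G(\mathbb{K})$, I would invoke the lattice-invariance of the weak Haagerup constant from \cite{Knu}, which gives $\boldsymbol\Lambda_{\mathrm{WH}}(\Gamma)=\boldsymbol\Lambda_{\mathrm{WH}}(G(\mathbb{K}))=\infty$, so $\Gamma$ likewise fails the weak Haagerup property. The principal technical obstacle is Step~1: the embedding of $\operatorname{SL}(3,\mathbb{K})$ or $\operatorname{Sp}(4,\mathbb{K})$ as a closed subgroup must be obtained uniformly across all local fields and all Tits indices of almost $\mathbb{K}$-simple groups of relative rank at least two, which requires careful bookkeeping of possible anisotropic kernels and central isogenies. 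By contrast, Steps~2 and~3 are routine applications of subgroup inheritance and lattice transfer once the appropriate model subgroup has been exhibited.
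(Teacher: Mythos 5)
Your proposal does not prove the stated theorem: what you have sketched is essentially the paper's proof of Theorem \ref{Thm_alg_grps}, namely that $\boldsymbol\Lambda_{\mathrm{WH}}(G(\mathbb{K}))=\infty$ and likewise for its lattices. The statement at hand is that a lattice $\Gamma$ has Property $\mathrm{(TTT)}$, i.e.\ that every wq-cocycle on $\Gamma$ is bounded (equivalently, by Theorem \ref{Thm_Oza_Dum}, that $\Gamma$ has Property $\mathrm{(T_P)}$). Nowhere in your argument do wq-cocycles or normalised positive definite kernels appear, and failure of the weak Haagerup property does not imply Property $\mathrm{(TTT)}$; the implication established in this paper runs in the opposite direction (relative $\mathrm{(TTT)}$ with $H$ infinite forces $\boldsymbol\Lambda_{\mathrm{WH}}(G)>1$, Theorem \ref{Thm_T_P_WH}), and it has no converse. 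A concrete obstruction: the free product $\operatorname{SL}_3(\Z)*\Z$ contains $\operatorname{SL}_3(\Z)$ as a subgroup, so by the monotonicity of the weak Haagerup constant under passing to subgroups it satisfies $\boldsymbol\Lambda_{\mathrm{WH}}(\operatorname{SL}_3(\Z)*\Z)=\infty$; yet it surjects onto $\Z$, hence carries an unbounded homomorphism into a Hilbert space, which is an unbounded (w)q-cocycle, so it does not have Property $\mathrm{(TTT)}$. Thus even a complete execution of your Steps 1--3 would leave the theorem untouched.

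Note also that the paper itself does not prove this theorem; it is quoted verbatim from \cite[Corollary 4.8]{Dum}. Any proof has to work directly with the objects defining $\mathrm{(TTT)}$ or $\mathrm{(T_P)}$, in the spirit of Ozawa's treatment of $\operatorname{SL}_3(\Z)$ in \cite{Oza}: one obtains relative Property $\mathrm{(TTT)}$ for pairs of the form $(\Gamma'\ltimes A,A)$ with $A$ abelian from relative Property $\mathrm{(T)}$ via \cite[Proposition 3]{Oza}, uses the root-subgroup structure of the higher rank group to cover it by such pairs, and promotes the relative property to the whole group by a bounded-generation or Mautner-type argument, together with the passage to lattices carried out in \cite{Dum}. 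Your Step 1 (locating $\operatorname{SL}_3(\mathbb{K})$ or $\operatorname{Sp}_4(\mathbb{K})$ inside $G(\mathbb{K})$ via rank-two subsystems) is the right structural input for Theorem \ref{Thm_alg_grps}, but for the present statement the reduction must be to subgroups carrying relative Property $\mathrm{(TTT)}$, not to subgroups failing the weak Haagerup property.
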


As a consequence, we obtain the following.

\begin{cor}
Let  $\Gamma$ be as in Theorem \ref{Thm_Dum_ag}, then $\boldsymbol\Lambda_{\mathrm{WH}}(\Gamma)>1$.
\end{cor}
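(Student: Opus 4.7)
The plan is simply to combine Theorem \ref{Thm_Dum_ag} with Theorem \ref{Thm_T_P_WH}, taking $H=G=\Gamma$. By Theorem \ref{Thm_Dum_ag}, the lattice $\Gamma$ has Property $\mathrm{(TTT)}$, which by definition means that the pair $(\Gamma,\Gamma)$ has relative Property $\mathrm{(TTT)}$. A lattice in a locally compact second countable group is discrete and countable, so $\Gamma$ fits the hypothesis of Theorem \ref{Thm_T_P_WH}.

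The only point requiring a brief justification is that $\Gamma$ is infinite, so that Theorem \ref{Thm_T_P_WH} actually applies (and so that the conclusion is not vacuous). Since $\operatorname{rank}_{\mathbb{K}} G\geq 2\geq 1$, the group $G(\mathbb{K})$ contains a non-trivial $\mathbb{K}$-split torus, and in particular it is non-compact. Any lattice in a non-compact, locally compact group is infinite: if $\Gamma$ were finite, then $G(\mathbb{K})/\Gamma$ would have finite Haar measure and $G(\mathbb{K})$ itself would have finite Haar measure, hence be compact, a contradiction.

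Once $\Gamma$ is known to be infinite, Theorem \ref{Thm_T_P_WH} with $H=G=\Gamma$ yields $\boldsymbol\Lambda_{\mathrm{WH}}(\Gamma)\in(1,\infty]$, which gives $\boldsymbol\Lambda_{\mathrm{WH}}(\Gamma)>1$ as required. There is no serious obstacle here; the entire content is contained in the two inputs, and the corollary is a one-line deduction modulo the elementary remark on infiniteness.
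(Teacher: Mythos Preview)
Your proof is correct and follows the same approach as the paper, which simply invokes Theorem \ref{Thm_Dum_ag} and then Theorem \ref{Thm_T_P_WH}. Your only addition is the explicit verification that $\Gamma$ is infinite and countable, which the paper leaves implicit.
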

\begin{proof}
By Theorem \ref{Thm_Dum_ag}, $\Gamma$ has Property $\mathrm{(TTT)}$. Hence, by Theorem \ref{Thm_T_P_WH}, $\boldsymbol\Lambda_{\mathrm{WH}}(\Gamma)>1$.
\end{proof}

As mentioned in the introduction, we actually have $\boldsymbol\Lambda_{\mathrm{WH}}(\Gamma)=\infty$ for such lattices. This is a consequence of a series of different results that we record here for the reader's convenience.

In \cite{Haa}, Haagerup showed that higher rank simple Lie groups with finite centre are not weakly amenable. The proof focuses on the groups $\operatorname{SL}_3(\R)$ and $\operatorname{Sp}_4(\R)$, and the general result follows from these two particular cases. This strategy has been adapted to other similar contexts; see \cite{HaaKnu, HaadLa, Lia, Ver}. The proof of Theorem \ref{Thm_alg_grps} follows the same ideas.

Let $\mathbb{K}$ be a field. The special linear group $\operatorname{SL}_3(\mathbb{K})$ is the group of $3\times 3$ matrices with coefficients in $\mathbb{K}$ and determinant 1. The symplectic group $\operatorname{Sp}_4(\mathbb{K})$ is given by all matrices $A\in\operatorname{GL}_4(\mathbb{K})$ such that $A^tJA=J$, where
\begin{align*}
J=\left(\begin{array}{cccc}
0 & 0 & 1 & 0\\
0 & 0 & 0 & 1\\
-1 & 0 & 0 & 0\\
0 & -1 & 0 & 0\\
\end{array}\right).
\end{align*}

The following was proved in \cite[Proposition 4.1]{LafdlS} and \cite[Proposition 5.1]{LafdlS}.

\begin{thm}[Lafforgue--de la Salle]\label{Thm_LafdlS}
Let $\mathbb{K}$ be either $\R$ or a nonarchimedian local field. Then $\boldsymbol\Lambda_{\mathrm{WH}}(\operatorname{SL}_3(\mathbb{K}))=\infty$.
\end{thm}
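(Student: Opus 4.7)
The plan is to argue by contradiction. Let $G = \operatorname{SL}_3(\mathbb{K})$ and let $K$ be a maximal compact subgroup of $G$ ($K = \operatorname{SO}(3)$ if $\mathbb{K}=\R$; $K = \operatorname{SL}_3(\mathcal{O}_\mathbb{K})$ in the nonarchimedean case). Assume $\boldsymbol\Lambda_{\mathrm{WH}}(G) < \infty$, so we may pick a sequence $\varphi_n \in B_2(G) \cap C_0(G)$ with $\|\varphi_n\|_{B_2(G)} \leq C$ for all $n$ and $\varphi_n \to 1$ uniformly on compacta. A first reduction is to replace $\varphi_n$ by its $K$-bi-average
\begin{align*}
\tilde\varphi_n(x) = \int_K \int_K \varphi_n(k_1 x k_2)\, dk_1\, dk_2,
\end{align*}
which remains in $B_2(G) \cap C_0(G)$, is still pointwise convergent to $1$ on compacta, and is $K$-bi-invariant. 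The $B_2$-bound is preserved by bi-translation invariance and convexity of $\|\cdot\|_{B_2(G)}$, and $\tilde\varphi_n$ lies in $C_0(G)$ because $K$ is compact, so $KxK$ escapes every fixed compact set as $x \to \infty$. Via the $KAK$ decomposition, $\tilde\varphi_n$ is determined by its radial part on the positive Weyl chamber $A^+$.

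The heart of the proof is a rigidity theorem for $K$-bi-invariant Herz--Schur multipliers on $G$: there exists a linear functional $L\colon B_2(G)^K \to \C$ with $L(\mathbf{1}) = 1$ and $\|L\| \leq 1$, which is continuous with respect to uniformly bounded pointwise convergence, and which vanishes on $B_2(G)^K \cap C_0(G)$. Granted this, $L(\tilde\varphi_n) = 0$ for every $n$, but $L(\tilde\varphi_n) \to L(\mathbf{1}) = 1$ by the continuity property applied to the bounded pointwise convergence $\tilde\varphi_n \to 1$, a contradiction.

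The main obstacle is constructing $L$. Informally, $L(\varphi)$ is the limit ``at infinity'' of the radial function $a \mapsto \varphi(a)$ on $A^+$, along a deep direction into the interior of the chamber. That such a limit exists, uniformly controlled by $\|\varphi\|_{B_2(G)}$, must be extracted from the fact that matrix coefficients of spherical representations of $G$ have strong decay properties, together with an expansion of $K$-bi-invariant $\varphi \in B_2(G)$ in a basis of spherical functions. Because $\operatorname{rank}_\mathbb{K} G \geq 2$, the chamber $A^+$ has two independent directions, and one exploits commuting rank-one $\operatorname{SL}_2$-subgroups of $G$ to propagate approximate constancy across the whole chamber; this is where the higher rank hypothesis is crucial, since $\operatorname{SL}_2(\mathbb{K})$ has the Haagerup property and no such $L$ exists for it. In the archimedean case, this radial analysis is carried out using Harish--Chandra spherical function estimates; in the nonarchimedean case, using Macdonald's formulas on the Bruhat--Tits building of $G$. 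These two analyses are the content of Propositions 4.1 and 5.1 of \cite{LafdlS}.
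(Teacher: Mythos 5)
The paper does not actually prove this theorem: it is quoted as a black box from \cite{LafdlS} (Propositions 4.1 and 5.1), accompanied only by a remark translating the $MS^\infty(L^2(G))$ multiplier norm used there into the $B_2(G)$ norm. Your outline is a correct account of how the statement is deduced from those propositions: the $K$-bi-averaging reduction (which preserves the $B_2$ bound, the $C_0$ condition and the convergence to $1$ on compacta) and the contradiction via a norm-one functional $L$ on $K$-bi-invariant multipliers that annihilates $C_0(G)$, sends $\mathbf{1}$ to $1$, and is continuous under uniformly bounded pointwise convergence is exactly the intended argument, and the existence of $L$ --- a limit at infinity with a quantitative error controlled by the multiplier norm --- is precisely the content of the cited propositions. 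So your proposal is consistent with (indeed more detailed than) the paper's treatment; in both cases the analytic core remains a citation rather than a proof.
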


\begin{rmk}
The results in \cite{LafdlS} are expressed in a different (more general) language. They involve the spaces $MS^p(L^2(G))$ of multipliers of Schatten classes. Here we are only interested in the case $p=\infty$, for which $\|\check{\varphi}\|_{MS^\infty(L^2(G))}=\|\varphi\|_{B_2(G)}$; see \cite[\S 1]{LafdlS} for details.
\end{rmk}

In the real case, this result was extended in \cite{HaadLa} to the symplectic group $\operatorname{Sp}_4(\R)$ (where it is called $\operatorname{Sp}(2,\R)$). Using this, the following was obtained in \cite[Theorem B]{HaaKnu}.

\begin{thm}[Haagerup--Knudby]\label{Thm_HaaKnu}
Let $G$ be a connected simple Lie group with $\operatorname{rank}_{\mathbb{R}}G\geq 2$. Then $\boldsymbol\Lambda_{\mathrm{WH}}(G)=\infty$.
\end{thm}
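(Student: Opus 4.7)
The plan is to reduce the problem to the two base cases $\operatorname{SL}_3(\R)$ and $\operatorname{Sp}_4(\R)$, for which $\boldsymbol\Lambda_{\mathrm{WH}} = \infty$ is already available (by Theorem \ref{Thm_LafdlS} with $\mathbb{K} = \R$ and by the main result of Haagerup--de Laat \cite{HaadLa}, respectively), and then conclude using the hereditary behaviour of the weak Haagerup constant under restriction to closed subgroups.

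The reduction step uses classical structure theory. Since $G$ is connected, simple, and has real rank at least $2$, its restricted root system $\Sigma$ is irreducible of rank $\geq 2$, and therefore contains a closed sub-root system of type $A_2$ or $C_2 = B_2$. The Lie subalgebra obtained by adjoining the corresponding root spaces to a suitable piece of a restricted Cartan subalgebra is $\R$-split, and integrates to an analytic closed subgroup $H \leq G$ that is locally isomorphic to $\operatorname{SL}_3(\R)$ or to $\operatorname{Sp}_4(\R)$. Since the weak Haagerup constant is unchanged under passage to finite covers or to finite central quotients, we obtain $\boldsymbol\Lambda_{\mathrm{WH}}(H) = \infty$ from the two base cases.

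To conclude, I would use the standard fact that the weak Haagerup property descends to closed subgroups without increase in the constant: restricting a Herz--Schur multiplier on $G$ to $H$ does not increase the $B_2$ norm (the defining decomposition \eqref{Schur_mult} simply restricts), and vanishing at infinity of a continuous function on $G$ implies vanishing at infinity of its restriction to the closed subgroup $H$. Hence $\boldsymbol\Lambda_{\mathrm{WH}}(G) \geq \boldsymbol\Lambda_{\mathrm{WH}}(H) = \infty$. The main obstacle is the root-theoretic step: one has to check that the sub-root system of type $A_2$ or $C_2$ inside $\Sigma$ really integrates to a \emph{split} real subgroup locally isomorphic to one of the two model groups, and not to some compact or twisted real form. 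This follows either from an inspection of the classification tables of real simple Lie algebras or from a direct argument using the highest restricted root, and requires no new analytic input beyond the two inputs cited above.
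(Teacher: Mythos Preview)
The paper does not give its own proof of Theorem \ref{Thm_HaaKnu}; it is simply quoted from \cite[Theorem B]{HaaKnu}, with the preceding sentence recording that the $\operatorname{Sp}_4(\R)$ base case comes from \cite{HaadLa}. Your sketch is exactly the argument used in that reference: reduce via the restricted root system to a closed subgroup locally isomorphic to $\operatorname{SL}_3(\R)$ or $\operatorname{Sp}_4(\R)$, invoke the two base cases, and conclude by monotonicity of $\boldsymbol\Lambda_{\mathrm{WH}}$ under closed subgroups together with its invariance under finite covers and finite central quotients.
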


In the nonarchimedian case, a similar result for $\operatorname{Sp}_4(\mathbb{K})$ was obtained in \cite[Theorem 2.4]{Lia}. Again, the language is quite different, but a particular case of that result can be stated as follows.

\begin{thm}[Liao]\label{Thm_Liao}
Let $\mathbb{K}$ be a nonarchimedian local field. Then $\boldsymbol\Lambda_{\mathrm{WH}}(\operatorname{Sp}_4(\mathbb{K}))=\infty$.
\end{thm}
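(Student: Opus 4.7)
The plan is to carry out the nonarchimedean analogue of the Schur-multiplier-on-Schatten-class argument of Lafforgue--de la Salle \cite{LafdlS} for $\operatorname{SL}_3(\mathbb{K})$, combined with the $\operatorname{Sp}_4$-specific spherical harmonic analysis of Haagerup--de Laat \cite{HaadLa} transposed to the Bruhat--Tits setting. I would argue by contradiction: assume $\boldsymbol\Lambda_{\mathrm{WH}}(G) < \infty$ for $G = \operatorname{Sp}_4(\mathbb{K})$, pick a witnessing sequence $\varphi_n \in B_2(G)$ (vanishing at infinity, tending to $1$ uniformly on compacta, with $\sup_n \|\varphi_n\|_{B_2(G)} \leq C$), let $K \subset G$ be a special maximal compact open subgroup (a vertex stabilizer in the Bruhat--Tits building of $G$), and replace $\varphi_n$ by its average under $K \times K$. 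Averaging does not increase the $B_2$-norm and preserves the other properties, so this reduces the problem to $K$-bi-invariant multipliers $\tilde\varphi_n$.

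Next I would use the Cartan decomposition $G = K A^{+} K$, where $A^{+}$ is a fundamental domain in a maximal $\mathbb{K}$-split torus under the Weyl group; for type $C_2$, $A^{+}$ is identified with the cone $\{(m,n) \in \Z^2 : m,n\geq 0\}$ of dominant cocharacters attached to the two fundamental coweights. Thus each $\tilde\varphi_n$ becomes a function on this two-dimensional cone that vanishes at infinity, takes value near $1$ near the origin, and is bounded in $B_2(G)$. The crucial structural input, analogous to \cite[Propositions 4.1 and 5.1]{LafdlS}, is a rigidity estimate: for any $K$-bi-invariant $\psi \in B_2(G)$, the asymptotic values of $\psi(m,n)$ as $(m,n) \to \infty$ along any ray in $A^{+}$ coincide, up to an error controlled by $\|\psi\|_{B_2(G)}$. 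To establish it I would realize the multiplier action on $K$-spherical subspaces of $L^2(G)$, parametrize the relevant Bruhat cells of $G/P$ by $\mathbb{K}$-coordinates attached to the two simple root subgroups, and compute Schatten $\infty$ norms of the resulting operator blocks indexed by pairs of chamber indices.

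The main obstacle is this explicit Schatten-norm calculation for $\operatorname{Sp}_4$: unlike the simply-laced $A_2$ root system of $\operatorname{SL}_3$, the $C_2$ root system has a long and a short simple root whose associated unipotent root subgroups have different dimensions (one copy of $\mathbb{K}$ versus a Heisenberg-type $\mathbb{K}^3$). This asymmetry forces a careful parametrization of the $K$-action on the full flag variety and a non-trivial book-keeping in the integral kernels of the associated convolution operators; matching the bounds of \cite{LafdlS} in the presence of a non-abelian root subgroup is the technical heart of the argument, and mirrors the difficulty overcome by Haagerup--de Laat in the real case. Once the rigidity estimate is established, the contradiction is immediate: vanishing of $\tilde\varphi_n$ at infinity forces all radial limits to be $0$, whereas rigidity together with $\tilde\varphi_n(e) \to 1$ forces every radial limit to approach $1$. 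This yields $\boldsymbol\Lambda_{\mathrm{WH}}(\operatorname{Sp}_4(\mathbb{K})) = \infty$.
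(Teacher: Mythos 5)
The first thing to note is that the paper does not prove this theorem at all: it is quoted verbatim (as a special case, translated into the language of Herz--Schur multipliers) from Liao's work, cited as \cite[Theorem 2.4]{Lia}, just as Theorem \ref{Thm_LafdlS} is quoted from \cite{LafdlS} and Theorem \ref{Thm_HaaKnu} from \cite{HaaKnu}. Your outline does correctly identify the strategy that the cited literature actually follows --- reduction to $K$-bi-invariant multipliers by averaging over a special maximal compact open subgroup, passage to the dominant cone of the Cartan decomposition, and a rigidity estimate forcing $K$-bi-invariant elements of $B_2(G)$ to have a limit at infinity controlled by their norm, which is incompatible with vanishing at infinity plus convergence to $1$ on compacta. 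The soft steps you describe (averaging does not increase the $B_2$-norm, the Cartan decomposition for type $C_2$, the shape of the final contradiction) are all sound, although the rigidity statement you need is slightly stronger than ``radial limits coincide'': you need $|\psi(g)-c_\psi|\leq \|\psi\|_{B_2(G)}\,\varepsilon(g)$ with $\varepsilon(g)\to 0$ at infinity and a single constant $c_\psi$, so that for a fixed $g$ deep enough in the cone the values $\varphi_n(g)$ are uniformly close to the limits $c_{\varphi_n}=0$ while $\varphi_n(g)\to 1$.

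The genuine gap is that the rigidity estimate itself --- which is the entire analytic content of the theorem --- is only announced, never established. You correctly locate the difficulty (the non-simply-laced $C_2$ root system, the asymmetry between the two maximal parabolics and the Heisenberg-type unipotent radical, the explicit Schatten-$\infty$ bounds for the operator blocks on the spherical subspaces), but ``I would realize the multiplier action \dots and compute Schatten norms'' is a statement of intent, not an argument. Those computations occupy the bulk of Liao's paper and of the real-case analogue in \cite{HaadLa}, and there is no a priori guarantee that the $\operatorname{SL}_3$ estimates of \cite{LafdlS} transfer; indeed the whole reason $\operatorname{Sp}_4$ requires a separate treatment is that they do not transfer formally. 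As written, your proposal is a correct roadmap to the known proof rather than a proof; to make it complete you would either have to carry out the $C_2$ Schatten-norm estimates in the Bruhat--Tits setting or simply cite \cite[Theorem 2.4]{Lia}, which is what the paper does.
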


All these results together allow us to obtain the same conclusion for more general higher rank algebraic groups.

\begin{proof}[Proof of Theorem \ref{Thm_alg_grps}]
Let $G$ be a connected, almost $\mathbb{K}$-simple $\mathbb{K}$-group with $\operatorname{rank}_{\mathbb{K}}G\geq 2$. By \cite[Proposition 1.6.2]{Mar} and \cite[Proposition 2.3.4]{Mar}, $G(\mathbb{K})$ has a closed subgroup $F$ which is isomorphic to a quotient of either $\operatorname{SL}_3(\mathbb{K})$ or $\operatorname{Sp}_4(\mathbb{K})$ by a finite subgroup. Hence, by \cite[Theorem A.(1)]{Knu2},
\begin{align*}
\boldsymbol\Lambda_{\mathrm{WH}}(G(\mathbb{K}))\geq \boldsymbol\Lambda_{\mathrm{WH}}(F).
\end{align*}
Moreover, by \cite[Theorem A.(2)]{Knu2}, together with Theorems \ref{Thm_LafdlS}, \ref{Thm_HaaKnu} and \ref{Thm_Liao}, we get
\begin{align*}
\boldsymbol\Lambda_{\mathrm{WH}}(F)=\infty.
\end{align*}
This shows that $G(\mathbb{K})$ does not have the weak Haagerup property. By \cite[Theorem A.(6)]{Knu2}, the same holds for any lattice in $G(\mathbb{K})$.
\end{proof}

\bibliographystyle{plain} 

\bibliography{Bibliography}

\end{document}